\def \real {\mathbb R}
\newtheorem{thm}{theorem}
\newtheorem{defn}{definition}
\newtheorem{lem}{lemma}
\newtheorem{cor}{corollary}
\begin{document}

\title{Discontinuities of plane functions projected from a surface with
methods for finding these}

\author{Burzin Bhavnagri }

\begin{abstract}
A result is given to find points where a real valued function on the
plane is not smooth. Provided this function is induced by a smooth
mapping from three dimensions to the plane, from a function on surfaces
in three dimensions. This has applications to numerical methods such
as image processing. 
\end{abstract}

%\subjclass[2000]{53C12 (Primary) 51P05, 68U10 (Secondary)}

\maketitle

\section{introduction}

A function $f:{\bf \real}^{m}\rightarrow{\bf \real}$ is continuous
at $x\in\real^{m}$ if for every sequence $x_{i}$ converging to $x$,
the sequence $f(x_{i})$ converges to $f(x)$. Thus any subsequence
of any  sequence $f(x_{i})$ converges to the same limit, namely $f(x)$.

Unfortunately, the definition of continuity is not helpful as to how
we may locate a discontinuity in an image, which is a finite array
of measurements. This paper presents a mathematical result which enables
one to find points where a function $I:\real^{2}\rightarrow\real$
is not smooth, such as discontinuities. It is assumed that $I$ is
induced by a smooth mapping \[
\iota:\real^{3}\rightarrow\real^{2}\quad\iota(x,y,z)=(\frac{x}{z},\frac{y}{z})\qquad z\neq0\]

In previous work \cite{le-bhavnagri97} a problem which arose in computer
vision was formulated using the shape spaces introduced by Kendall.
An application of the explicit solution was the following. An image
is acquired from a video camera. The image contains a shape such as
a triangle, square or circle. The problem is to classify it into categories
such as triangles, squares or circles. For example, an image of a
triangle will typically contain far more than three points, so it
needs to be simplified. A square on the other hand should not be simplified
to a triangle, and a circle should not be simplified very much at
all. In order to obtain the points to be simplified in the first place,
we first have to find discontinuities in $I$. 

This paper is concerned with the mathematical aspects of the discontinuity
identification problem. In what follows it is assumed the reader is
familiar with differential geometry. Although the result is motivated
by computer vision and human vision, differential geometry is not
familiar there. As a mathematical result it could be extended by a
wider scientific audience by replacing the above $\iota$ with some
other measurement $\iota$. 

It is very surprising that using optical principles, we can establish
mathematically that there really is a discontinuity.  Not some other
property such as a local maximum in a derivative, but a discontinuity.
The derivative is defined in calculus or real analysis. All books
on real analysis \cite{binmore82} make it clear that continuity is
a necessary condition for the derivative to be well-defined. 

When grappling with this problem others have used piecewise smooth
or other assumptions. Instead here the contrapositive will be used
to obtain a more general result. Owing to the microstructure of a
scene surface, the light radiated from the surface undergoes extremely
rapid intensity changes over small regions. This can be confirmed
by examining gray values of neighbouring points in a real image. It
is for this reason that this more general result may be helpful.

A fundamental psychological observation about the nature of vision
was made by Wertheimer, who noticed the apparent motion not of individual
dots, but of fields in images presented sequentially as a movie \cite{wertheimer12}.
This started the Gestalt school of psychology \cite{wertheimer38,kanizsa79}. 

A differential operator maps a greyscale image to a vector field.
It is implicitly suggested by Hoffman \cite{hoffman66} that the primary
visual cortex of the brain produces some sort of vector field. It
should be possible to line up the vectors in a head to tail fashion
to form contours. We want to know when this results in a family of
closed oriented contours rather like a topographic map. The answer
is provided by a famous theorem of differential geometry, namely the
Foliation theorem of Frobenius.

\section{Sufficient condition for discontinuity}

In what follows we consider a real valued function on the plane. Some
examples of such functions in a color image are red, blue, green,
hue, saturation or brightness. 

\begin{example}
Suppose $I$ is a smooth function (intensity). Let $M$ be the subset
of the plane on which $(-\partial I/\partial y,\partial I/\partial x)$
is non-zero. $M$ is an open set because by definition of continuity
the inverse of a continuous function maps open sets to open sets,
and the line minus zero is open. The map \[
(x,y)\mapsto{\rm span}(-\,\frac{\partial I}{\partial y}(x,y),\frac{\partial I}{\partial x}(x,y))\]
 is a smooth function that maps $M$ to the set of lines through the
origin in the plane ${\bf P}^{1}$ (projective space). 
\end{example}
It is an example of what is called a one dimensional distribution\index{distribution}
in differential geometry \cite{warner83}, \cite{brickell-clark70}.
The vector field is called a basis for this distribution.

\begin{example}
Consider $f(x,y)=x+y$ and $g(x,y)=e^{x+y}$. \[
-\frac{\partial f}{\partial y}=-1\quad-\frac{\partial g}{\partial y}=-e^{x+y\quad}\frac{\partial f}{\partial x}=-1\quad\frac{\partial g}{\partial x}=-e^{x+y}\]
 Thus at every point in the plane, the Hamiltonian vector of $g$
is $e^{x+y}$ times the Hamiltonian vector of $f$. These are two
different vector fields that are a basis for the same distribution.
On the other hand the Hamiltonian vector of $e^{x}+e^{y}$ is not
collinear with the Hamiltonian of $f$, so not every pair of vector
fields does span a one dimensional distribution.
\end{example}
\begin{defn}
Suppose $Y$ is a vector field, and it is the basis for a distribution.
The distribution is called integrable if at every point one can find
a disk on which a coordinate system $(x_{1},x_{2})$ can be chosen
such that \[
Y_{1}\frac{\partial}{\partial y_{1}}+Y_{2}\frac{\partial}{\partial y_{2}}=\frac{\partial}{\partial x_{1}}\]

The following is based on proposition 1.53 in \cite{warner83}.
\end{defn}
\begin{lem}
Let $m\in M,$ and let $X$ be a smooth vector field on $M$ such
that $X(m)\neq0$. Then there exists a coordinate system $(U,\varphi)$
with coordinate functions $x_{1},\, x_{2}$ on a neighborhood of $m$
such that 

\begin{equation}
X\mid U=\frac{\partial}{\partial x_{1}}\mid U\end{equation}

\end{lem}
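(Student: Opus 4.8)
The plan is to prove this "flow box" or "straightening" lemma by using the flow of the vector field $X$ to build the desired coordinates, exactly in the spirit of Warner's Proposition 1.53. The statement says: near a point $m$ where $X(m)\neq 0$, there is a chart in which $X$ becomes the constant coordinate field $\partial/\partial x_1$.

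First I would set up preliminary coordinates. Since $M$ is a surface (a $2$-manifold), pick any chart $(V,\psi)$ around $m$ with $\psi(m)=0$, giving coordinates $(u_1,u_2)$. Because $X(m)\neq 0$, after a linear change of the $u$-coordinates I may assume $X(m)=\partial/\partial u_1$ at $m$; in particular the slice $S=\{u_1=0\}$ is a $1$-dimensional submanifold through $m$ that is transverse to $X$ at $m$. Next I would invoke the fundamental existence, uniqueness, and smooth-dependence theorem for ODEs to get the local flow $\theta_t$ of $X$: there is an open neighborhood of $m$ and an $\varepsilon>0$ such that $\theta\colon(-\varepsilon,\varepsilon)\times W\to M$, $(t,p)\mapsto\theta_t(p)$, is smooth, $\theta_0(p)=p$, and $\frac{d}{dt}\theta_t(p)=X(\theta_t(p))$.

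Then I would define the candidate coordinate map. Let $\sigma(x_2)$ be the point of the slice $S$ with second coordinate $x_2$ (for $|x_2|$ small), and set
\[
\Phi(x_1,x_2)=\theta_{x_1}\bigl(\sigma(x_2)\bigr),
\]
defined for $(x_1,x_2)$ in a small square around $0$. This is smooth by smooth dependence on initial conditions. I would then compute its differential at the origin: $\partial\Phi/\partial x_1$ at $0$ is $X(m)=\partial/\partial u_1$, while $\partial\Phi/\partial x_2$ at $0$ is the velocity of the curve $\sigma$, which lies along $\partial/\partial u_2$ and is nonzero. Hence $d\Phi_0$ is an isomorphism, so by the inverse function theorem $\Phi$ restricts to a diffeomorphism from a neighborhood of $0$ onto an open set $U\ni m$; take $\varphi=\Phi^{-1}$ with coordinate functions $x_1,x_2$. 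Finally, the key identity: for fixed $x_2$, the curve $x_1\mapsto\Phi(x_1,x_2)$ is by construction an integral curve of $X$, so $d\Phi\bigl(\partial/\partial x_1\bigr)=X$ at every point of $U$; equivalently $X\mid U=\partial/\partial x_1\mid U$, which is equation (1).

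**The main obstacle** is not any single computation but making sure the domains are chosen consistently so everything is simultaneously defined: one must shrink $W$ and $\varepsilon$ so that $\theta_{x_1}(\sigma(x_2))$ stays inside the original chart $V$, and shrink again after the inverse function theorem. A second point requiring care is the transversality step — arranging that $S=\{u_1=0\}$ is genuinely transverse to $X$ at $m$ so that $\Phi$ "spreads out" the slice in a complementary direction; this is exactly what forces $d\Phi_0$ to be invertible, and it is where the hypothesis $X(m)\neq 0$ is used. Everything else (smoothness of $\Phi$, the flow identity) is a direct appeal to the ODE theorem and the inverse function theorem.
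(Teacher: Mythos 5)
Your proposal is correct and follows essentially the same route as the paper's proof: choose preliminary coordinates with $X(m)=\partial/\partial y_{1}\mid_{m}$, push the transverse slice $\{y_{1}=0\}$ along the flow of $X$ (your $\Phi$ is exactly the paper's map $\sigma(t,a_{2})=X_{t}(\tau^{-1}(0,a_{2}))$), check nonsingularity at the origin, invert via the Inverse Function Theorem, and read off $X\mid U=\partial/\partial x_{1}\mid U$ from the flow identity. No substantive differences.
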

\begin{proof}
Choose a coordinate system $(V,\tau)$ centered at m with coordinate
functions $y_{1},\, y_{2}$ , such that

\begin{equation}
X_{m}=\frac{\partial}{\partial y_{1}}\mid_{m}.\end{equation}

It follows from Picard's theorem that there exists an $\varepsilon>0$
and a neighborhood $W$ of the origin in $\real$ such that the map
\[
\sigma(t,a_{2})=X_{t}(\tau^{-1}(0,a_{2}))\]
is well-defined and smooth for $(t,a_{2})$$\in(-\varepsilon,\varepsilon)\times W\subset\real^{2}$. 

Now, $\sigma$ is non-singular at the origin since \[
d\sigma\left(\frac{\partial}{\partial r_{i}}\right)\mid_{0}=X_{m}=\frac{\partial}{\partial y_{i}}\mid_{m}\quad(i\geq1)\]
 Thus by the Inverse Function Theorem, $\varphi=\sigma^{-1}$is a
coordinate map on some neighborhood $U$ of $m$. Let $x_{1},x_{2}$denote
the coordinate functions of the coordinate system $(U,\varphi)$.
Then since

\[
d\sigma\left(\frac{\partial}{\partial r_{1}}\mid_{(t,a_{2})}\right)=X_{\sigma(t,a_{2})}\]
we have \[
X\mid U=\frac{\partial}{\partial x_{1}}\mid U\]
.
\end{proof}
\begin{lem}
\label{lem:integrability}Any one dimensional distribution is integrable.
\end{lem}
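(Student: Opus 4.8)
The plan is to obtain this as an immediate consequence of the preceding lemma, the flow-box theorem. Fix a one dimensional distribution $D$ on $M$ and a point $m\in M$. By hypothesis $D$ comes with a basis vector field: in the situation of the first example this is the everywhere defined, nowhere vanishing field $(-\partial I/\partial y,\partial I/\partial x)$, while for a general one dimensional distribution one may invoke local triviality of a line subbundle of the tangent bundle --- equivalently, one reads off a smooth nonvanishing section from the ${\bf P}^{1}$-valued map of the first example by working in an affine chart of ${\bf P}^{1}$. Either way, let $Y$ be a smooth vector field, defined on a neighborhood of $m$, with $Y(p)$ spanning $D_{p}$ for every $p$ in its domain; in particular $Y(m)\neq0$.

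Next I would apply the preceding lemma to $Y$ at $m$. It produces a coordinate system $(U,\varphi)$ with coordinate functions $x_{1},x_{2}$ on a neighborhood of $m$ such that $Y\mid U=\frac{\partial}{\partial x_{1}}\mid U$. Shrinking $U$ if necessary, we may assume $\varphi(U)$ is a Euclidean disk about $\varphi(m)$, and the identity $Y\mid U=\frac{\partial}{\partial x_{1}}\mid U$ survives the restriction. Since $Y$ spans $D$, the distribution is spanned by $\frac{\partial}{\partial x_{1}}$ on this disk, which is exactly the condition in the definition of integrability above, where $Y=Y_{1}\frac{\partial}{\partial y_{1}}+Y_{2}\frac{\partial}{\partial y_{2}}$ is the expression of $Y$ in the ambient coordinates $(y_{1},y_{2})$. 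As $m\in M$ was arbitrary, $D$ is integrable.

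Essentially all the work sits in the preceding lemma (ultimately in Picard's theorem and the inverse function theorem); the only additional point needing care is the passage from the ${\bf P}^{1}$-valued description of a distribution to a local nonvanishing basis field, which is routine and is in any case automatic in the setting of the first example. It is worth noting where the rank one hypothesis enters: a single nowhere vanishing vector field can always be straightened into a coordinate field, and in rank one that one field already spans the whole distribution, so no involutivity condition arises and none of the general Frobenius machinery is required. This is why the result can be stated unconditionally, and it is exactly the feature that makes the contrapositive argument in the next section possible.
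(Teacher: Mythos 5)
Your proof is correct and follows essentially the same route as the paper: the paper simply cites proposition 1.53 of Warner and proposition 11.1.1 of Brickell--Clark, and proposition 1.53 is precisely the preceding straightening lemma, so your derivation (choose a local nonvanishing basis field for the distribution, straighten it to $\partial/\partial x_{1}$, and observe that this is exactly the definition of integrability given above) is the argument the citation points to, written out in full.
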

\begin{proof}
See proposition 11.1.1 in \cite{brickell-clark70} or proposition
1.53 in \cite{warner83}. 
\end{proof}
\begin{defn}
Let $(a,b)$ be an interval on the real line, but $a$ can be $-\infty$
and $b$ can be $+\infty$. A differentiable curve $\gamma:(a,b)\rightarrow\real^{2}$
is an integral curve\index{integral curve} of a vector field $X$
if $\gamma^{\prime}(t)=X(\gamma(t))$ for all $t\in(0,1)$ where $\gamma^{\prime}$
denotes the derivative of $\gamma$.
\end{defn}
\label{IntegralCurves}%
\begin{figure}

\caption{(a) Vector field X on surface patch (b) integral curves of X}
(a)\includegraphics{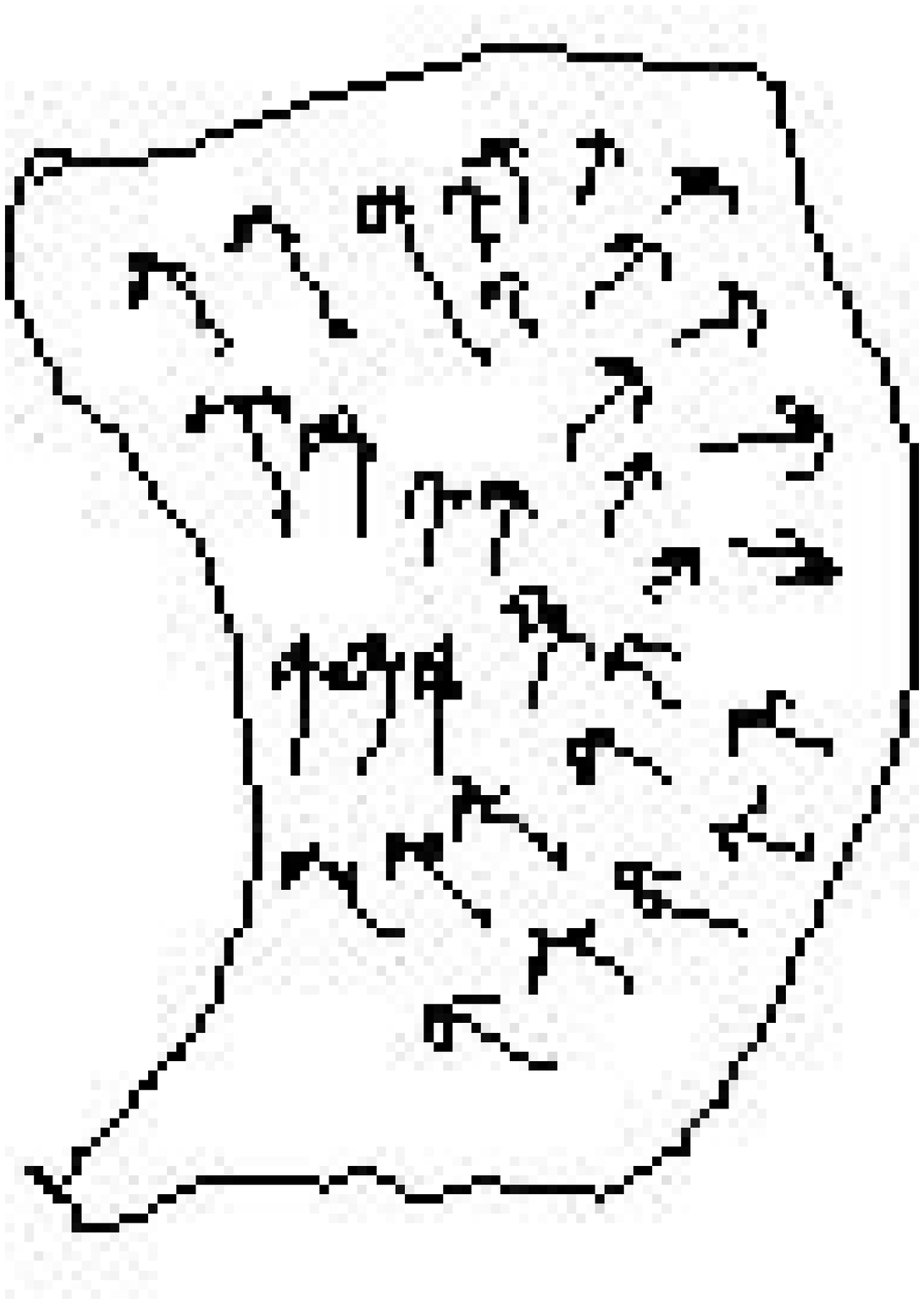}(b)\includegraphics{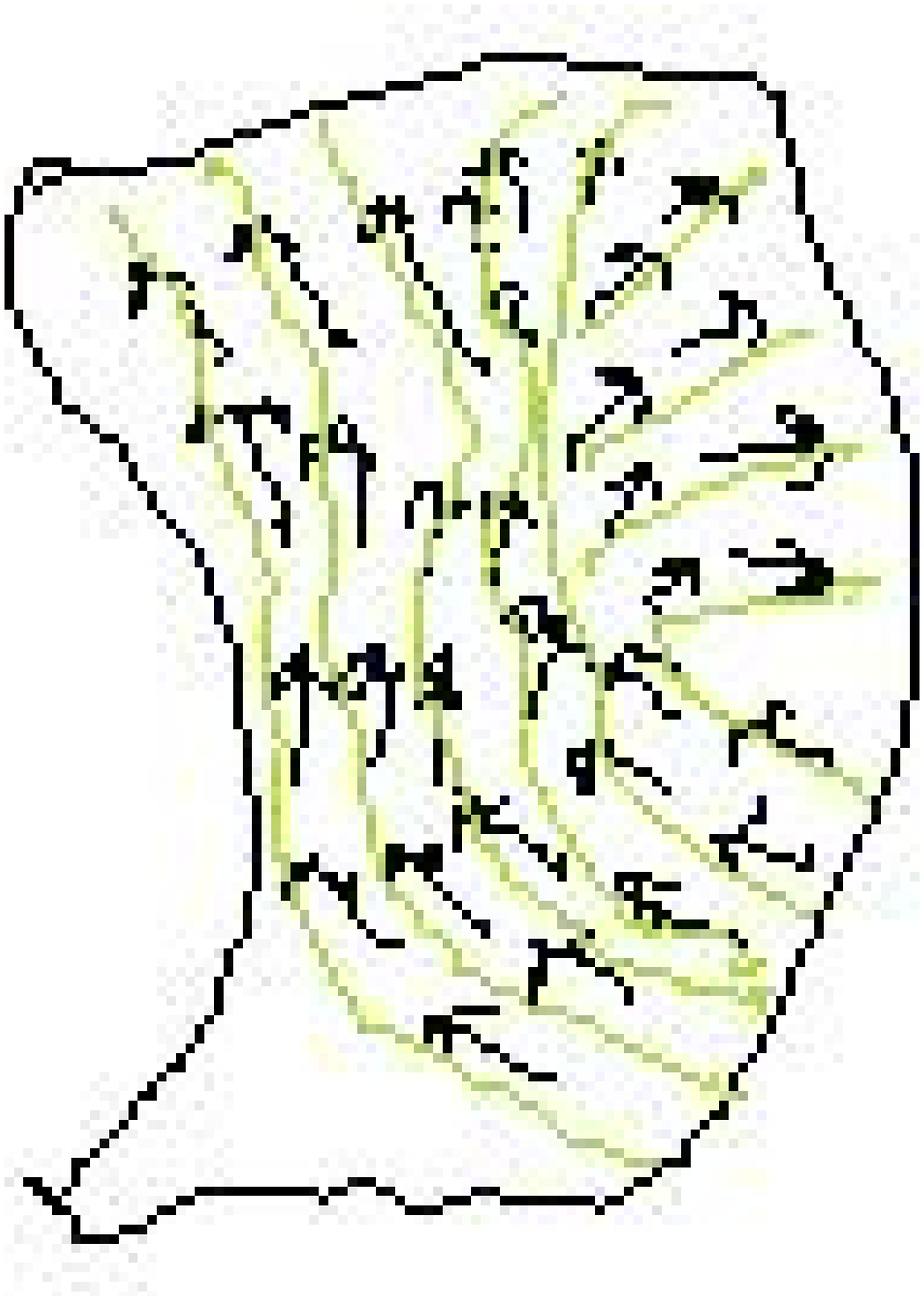}

\end{figure}
Thus an integral curve is a curve whose tangent at each point coincides
with the vector of the vector field at the same point. Since the gradient
vector points in the direction of steepest descent, perpendicular
vectors are produced by the Hamiltonian operator, and if we join these
Hamiltonian vectors in a head to tail fashion, they line up into integral
curves.

\begin{defn}
An integral curve is called a complete integral curve if its domain
is $\real=(-\infty,+\infty)$.
\end{defn}
The following is the authors Theorem 3.3 from \cite{bhav96}. 

\begin{thm}
\label{thm:foliationthm}Let $X$ be a smooth vector field on the
plane, and $M$ the subset of the plane on which $X$ is non-zero.
Each point of $M$ lies in precisely one maximal integral curve, and
there is a one parameter family of such integral curves in $M$. An
integral curve does not intersect itself.
\end{thm}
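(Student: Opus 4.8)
The plan is to build the maximal integral curves by gluing together local solutions, taking the local picture from the straightening lemma above and the global patching from Picard's theorem (uniqueness of solutions of the ODE $\gamma'=X\circ\gamma$). A preliminary fact I would record is that an integral curve which meets $M$ cannot leave $M$: if $\gamma$ is an integral curve with $\gamma(t_0)\in M$ and $\gamma(t_1)\notin M$, let $s$ be the infimum of the parameters between $t_0$ and $t_1$ at which $\gamma$ lies outside $M$; since $M$ is open and its complement closed, $\gamma(s)\notin M$, so $X(\gamma(s))=0$ and hence $\gamma'(s)=0$, and uniqueness then forces $\gamma$ to coincide near $s$ with the constant curve at $\gamma(s)$, contradicting $\gamma(t)\in M$ for $t$ slightly below $s$. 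So it suffices to study $X$ on $M$ alone.

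Next I would establish that each fixed $m\in M$ lies on exactly one maximal integral curve. Picard's theorem supplies at least one integral curve $\gamma$ with $\gamma(0)=m$. If $\gamma_1,\gamma_2$ are integral curves agreeing at one parameter value, the set of parameters where they agree is nonempty, closed by continuity, and open by the local uniqueness in Picard's theorem; since the overlap of their (interval) domains is connected, they agree throughout it. Consequently the union of the domains of all integral curves through $m$ is again an open interval, and gluing produces a single maximal integral curve $\gamma_m$, unique up to a parameter translation $t\mapsto t+c$, of which every integral curve through $m$ is a restriction. Two maximal integral curves that share a point are translates of one another and so have the same image; thus the images of the $\gamma_m$ partition $M$.

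For the assertion that these form a one parameter family I would return to the straightening lemma: around each point of $M$ take a flow box $(U,\varphi)$ with $\varphi=(x_1,x_2)$ and $X\mid U=\partial/\partial x_1$, so on $U$ the integral curves are precisely the slices on which $x_2$ is constant, and the single coordinate $x_2$ indexes them. These transverse coordinates are the ``one parameter'': passing to the quotient of $M$ obtained by collapsing each maximal integral curve to a point, the flow boxes provide charts exhibiting this leaf space as a one dimensional (a priori only locally Euclidean, possibly non-Hausdorff) manifold, which is the precise content of ``a one parameter family of integral curves in $M$''.

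It remains to treat self-intersection. If $\gamma_m(t_1)=\gamma_m(t_2)$ with $t_1<t_2$, then $t\mapsto\gamma_m(t)$ and $t\mapsto\gamma_m(t+(t_2-t_1))$ are integral curves agreeing at $t_1$, hence identical, so $\gamma_m$ is periodic; in particular, at any point it revisits, its velocity is $X$ evaluated there and so is the same on each pass, which rules out a transversal self-crossing, i.e. $\gamma_m$ is a regular curve that is either injective or a simple loop. I expect the connectedness-based patching to be the technical heart of the argument, and the final point to be the one most in need of care: ``no transversal self-crossing'' comes for free, whereas outright injectivity can fail, precisely when $X$ carries a periodic orbit inside $M$.
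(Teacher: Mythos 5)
Your proof is correct, and it takes a genuinely different route from the paper's. The paper deduces the theorem from the Frobenius machinery: integrability of the rank-one distribution spanned by $X$ (Lemma \ref{lem:integrability}), involutivity, the Frobenius cubic coordinates whose slices $x_{2}={\rm constant}$ are integral submanifolds, the existence of a unique maximal integral submanifold through each point (Warner, Theorem 1.64), and the identification of integral curves of $X$ with integral submanifolds of the distribution (Brickell--Clark, Proposition 11.3.1). You instead argue directly from the ODE $\gamma^{\prime}=X\circ\gamma$: Picard gives local existence and uniqueness, the open-and-closed argument on the connected overlap of domains glues all solutions through a point into one maximal curve unique up to parameter translation, the flow boxes of the straightening lemma furnish the transverse coordinate that indexes the curves locally and exhibits the leaf space, and the periodicity analysis disposes of self-intersections. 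Your route is more elementary and essentially self-contained (it uses only Picard's theorem and the straightening lemma, both already present in the paper), and it is more careful on two points the paper passes over: the distinction between a periodic orbit, which revisits points without transversally crossing itself, and a genuine self-crossing, which is what the theorem excludes; and the fact that maximal integral curves on the open set $M$ need not be complete, whereas the paper's closing sentence asserts each point lies in precisely one \emph{complete} integral curve --- your argument correctly delivers maximality only. What the Frobenius route buys in return is generality (distributions of arbitrary rank, and line fields not globally spanned by a single vector field) and the immediate packaging of each leaf as a connected injectively immersed submanifold. The only step worth making fully explicit in your version is the last one: in the periodic case the set of periods is a closed subgroup of $\real$, hence $T\mathbb{Z}$ for some $T>0$ (it cannot be all of $\real$ because $X\neq0$ on $M$), so the orbit is a simple closed curve, which is the precise sense in which it does not intersect itself.
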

\begin{proof}
By lemma \ref{lem:integrability} the distribution $m\mapsto{\rm span}(X(m))$
is integrable. Proposition 11.2.1 in \cite{brickell-clark70} asserts
that an integrable distribution is involutive. A one dimensional integral
submanifold of $M$ is a smooth curve in $M$ whose tangent equals
the line in the distribution at each point of the curve. The Frobenius
theorem (theorem 1.60) in \cite{warner83} asserts that there is a
cubic coordinate system centred at each point of $M$, with coordinates
$x_{1},x_{2}$ such that $x_{2}={\rm constant}$ are integral submanifolds
of $M$. By theorem 1.64 in \cite{warner83}, there is a unique maximal
integral submanifold passing through each point of $M$. By proposition
11.3.1 in \cite{brickell-clark70} any integral curve of $X$ is a
integral submanifold of $M$. Consequently each point of $M$ lies
in precisely one complete integral curve, and there is a one parameter
family of such curves in $M$. %Because M is open and the vector field does not vanish on M

\end{proof}
\label{ProjectedIntegralCurves}%
\begin{figure}

\caption{Integral curves on plane corresponding to smooth surface integral
curves; lines denote $\iota$}
\includegraphics{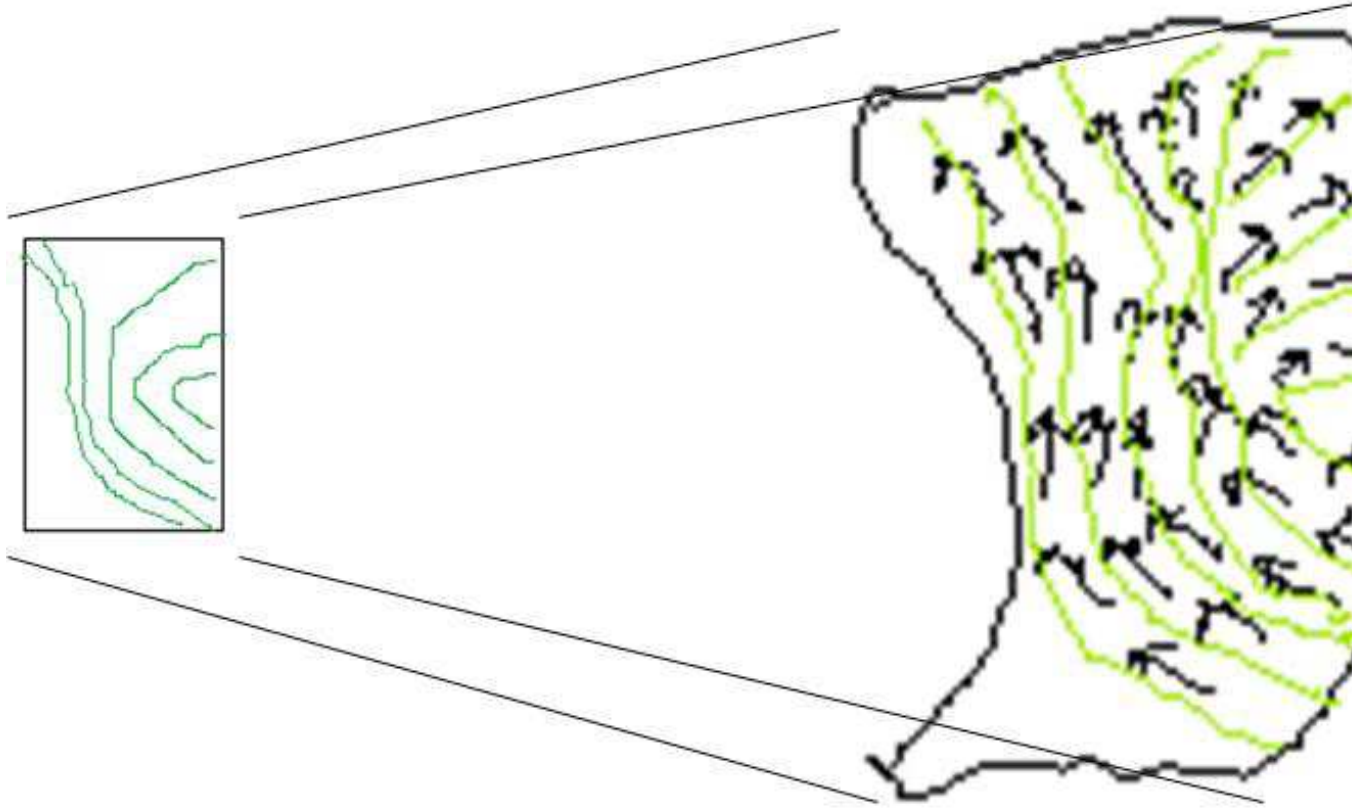}

\end{figure}

\begin{example}
Consider the circle $(z-1)^{2}+x^{2}=\frac{1}{2}$. Let $\iota:(x,z)\mapsto x/z$
produce a one-dimensional image of this circle. At the point $x=\frac{1}{2},z=\frac{1}{2}$
the tangent to the circle passes through the origin, so this is an
occluding point of the image. Let $t$ be the image coordinate, and
calculate the differential \[
d\iota(\alpha\frac{\partial}{\partial x}+\beta\frac{\partial}{\partial z})=(1/z-x/z^{2})\frac{\partial}{\partial t}\]
 So the tangent vector to the circle at $(\frac{1}{2},\frac{1}{2})$
is \[
v=\frac{1}{2}\frac{\partial}{\partial x}+\frac{1}{2}\frac{\partial}{\partial z}\]
 Evaluating $d\iota(v)$ we find it is zero. Since $\alpha=x,\beta=z$
at any occluding point, $d\iota(v)=0$ at any occluding point.
\end{example}
\begin{defn}
Let $\iota:(x,y,z)\rightarrow(t,s)$ where $t=x/z,\, s=y/z$, and
$z\neq0$. 
\end{defn}
Then \[
d\iota(v_{1}\frac{\partial}{\partial x}+v_{2}\frac{\partial}{\partial y}+v_{3}\frac{\partial}{\partial z})\mid_{a}=(\frac{v_{1}}{z}-\frac{v_{3}x}{z^{2}})\frac{\partial}{\partial t}+(\frac{v_{2}}{z}-\frac{v_{3}y}{z^{2}})\frac{\partial}{\partial s}\]
 Thus $d\iota(v)\mid_{a}=0$ if and only if $v$ is a multiple of
$a$. Thus $d\iota(v)$ has rank $2$ at all points on a surface disjoint
from the focal plane, except occluding points, where it has rank $1$.

\section{Sufficient condition for a discontinuity}

The corollaries to the theorem \ref{thm:foliationthm} give us a condition
to establish that there is a discontinuity or occlusion in an image.

\begin{cor}
Suppose $M_{1}$ is a smooth two dimensional manifold in the scene,
whose tangent planes are everywhere disjoint from the optical centre
of a camera. Suppose also that the intensity on \textup{$M_{1}$}
is smooth.

(i) Then there is a smooth invertible mapping between vector fields
on \textup{$M_{1}$}and vector fields on the image of \textup{$M_{1}$}.

(ii) Moreover, the non-vanishing vector fields on \textup{$M_{1}$}
correspond to the non-vanishing vector fields on the image of \textup{$M_{1}$.}

\textup{(iii) }Each point in a smooth and unoccluded image of a smooth
and unoccluded surface lies in precisely one maximal integral curve
of a smooth image vector field, and there is a one parameter family
of such integral curves. 
\end{cor}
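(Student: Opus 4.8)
The plan is to deduce all three parts from a single structural fact: under the stated hypothesis, $\iota$ restricted to $M_1$ is a diffeomorphism onto an open subset of the plane. First I would rule out occluding points. By the computation of $d\iota$ given above, a point $a\in M_1$ is occluding exactly when some tangent vector $v$ at $a$ is a scalar multiple of $a$; but then the line $\{a+tv\}$ passes through the optical centre, so the affine tangent plane at $a$ contains the optical centre, contrary to hypothesis. Hence $d\iota$ has rank $2$ at every point of $M_1$, and since $M_1$ and the plane both have dimension $2$, the map $\iota|_{M_1}$ is a local diffeomorphism. A local diffeomorphism is an open map, so its image $N_1:=\iota(M_1)$ is open in $\real^2$; and together with the injectivity that is part of the image being unoccluded (a multiply covered image point would be a point seen occluded along a second ray of sight), $\iota|_{M_1}$ is a diffeomorphism $M_1\to N_1$.

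For (i) I would take the correspondence to be push-forward: for a smooth vector field $X$ on $M_1$ set $(\iota_*X)(q)=d\iota_{\iota^{-1}(q)}\bigl(X(\iota^{-1}(q))\bigr)$ for $q\in N_1$. Smoothness of $\iota$ and of $\iota^{-1}$ make $\iota_*X$ a smooth vector field on $N_1$, the assignment $X\mapsto\iota_*X$ is $\real$-linear, and push-forward by $\iota^{-1}$ is a two-sided inverse; this is the claimed smooth invertible mapping between the vector fields on $M_1$ and those on the image. For (ii), observe that since $d\iota_p$ has rank $2$ it is a \emph{linear isomorphism} of tangent spaces at every $p\in M_1$; therefore $X(p)=0$ if and only if $(\iota_*X)(\iota(p))=0$, so $X$ is non-vanishing precisely when $\iota_*X$ is non-vanishing.

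For (iii) I would apply Theorem \ref{thm:foliationthm} on the surface $M_1$. Its proof uses only local coordinates, the Frobenius theorem, and the elementary theory of integral curves, all valid on an arbitrary $2$-dimensional manifold, so the conclusion holds with the plane replaced by $M_1$: letting $X$ span the distribution determined by the smooth intensity on $M_1$ and $M_1'\subseteq M_1$ be the open set where $X\neq0$, each point of $M_1'$ lies in exactly one maximal integral curve of $X$, and these form a one parameter family. I would then transport this to the image. Because $\iota|_{M_1}$ is a diffeomorphism it carries integral curves of $X$ to integral curves of the image vector field $Y:=\iota_*X$: if $\gamma'(t)=X(\gamma(t))$ then $(\iota\circ\gamma)'(t)=d\iota(\gamma'(t))=Y((\iota\circ\gamma)(t))$, and maximality is preserved since an extension of $\iota\circ\gamma$ would pull back to an extension of $\gamma$. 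By (ii), $Y$ is non-vanishing exactly on $\iota(M_1')$. Hence each point of the smooth unoccluded image lies in precisely one maximal integral curve of $Y$, with a one parameter family of them.

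The main obstacle here is not depth but bookkeeping: one must fix that the ``image vector field'' is the push-forward $\iota_*X$ (so that (ii) is available), must note that Theorem \ref{thm:foliationthm}, although phrased for the plane, is legitimately invoked on the abstract surface $M_1$ because its proof is manifold-general, and must be clear that the global injectivity of $\iota|_{M_1}$—without which ``the image of $M_1$'' need not be a manifold and the statement would be vacuous—is exactly what ``unoccluded image'' encodes.
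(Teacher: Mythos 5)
Your proposal is correct and follows essentially the same route as the paper: the correspondence in (i) is the push-forward by $d\iota$, (ii) comes from the tangent-plane hypothesis forcing $d\iota$ to have rank $2$ everywhere on $M_{1}$, and (iii) rests on Theorem \ref{thm:foliationthm}. You supply far more detail than the paper's own few-line proof, and the only small divergence is that you apply Theorem \ref{thm:foliationthm} on the surface $M_{1}$ and transport maximal integral curves through the diffeomorphism, whereas the paper applies it directly to the smooth image vector field on the plane.
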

\begin{proof}
(i) The differential of $\iota$ maps a vector field on a smooth surface
to a vector field on its image. Vector fields on images minus the
occluding points can be mapped to vector fields on a smooth surface
by the inverse differential. 

(ii) That non-vanishing vector fields on $M_{1}$ correspond to the
non-vanishing vector fields on the image of $M_{1}$, follows because
the tangent planes were disjoint from the optical centre. 

(iii) From Theorem \ref{thm:foliationthm}, each point in a smooth
and unoccluded image of a smooth and unoccluded surface lies in precisely
one maximal integral curve of a smooth image vector field, and there
is a one parameter family of such integral curves.
\end{proof}
The contrapositive is often used in mathematics, mainly for proofs
by contradiction, such as the irrationality of the square root of
two. Consider any predicates $P,\: Q$ and denote $notP$ and $notQ$
by $\neg P,\neg Q$. The contrapositive of $P$$\Rightarrow Q$ is
$\neg Q\Rightarrow\neg P$. Since $P\Rightarrow Q$ is $\neg P$$\vee Q$
then $\neg(\neg P)$$\vee\neg Q$ which is $P\vee\neg Q$ thus $\neg Q\Rightarrow\neg P$.
It is the contrapositive that enables us to find a discontinuity. 

\begin{cor}
\label{cor:contra-foliation}If an image point lies in more than one
maximal integral curve, then either the intensity on the surface is
not smooth, or the surface is not smooth, or the surface is occluded.
\end{cor}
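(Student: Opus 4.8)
The plan is to obtain this statement as the direct logical contrapositive of part (iii) of the preceding corollary. First I would isolate the implication in that corollary: the hypotheses are that $M_1$ is a smooth surface with tangent planes disjoint from the optical centre (i.e.\ the surface is smooth and unoccluded) and that the intensity on $M_1$ is smooth; the conclusion is that each point in the image lies in precisely one maximal integral curve of the associated smooth image vector field. So, writing $P$ for the conjunction ``the intensity on the surface is smooth \emph{and} the surface is smooth \emph{and} the surface is unoccluded'' and $Q$ for ``every image point lies in precisely one maximal integral curve,'' the corollary asserts $P \Rightarrow Q$.

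Next I would invoke the contrapositive principle recalled in the paragraph preceding the statement: $P \Rightarrow Q$ is equivalent to $\neg Q \Rightarrow \neg P$. Here $\neg Q$ is the assertion that some image point lies in more than one maximal integral curve (failure of uniqueness; existence is automatic by Theorem \ref{thm:foliationthm} wherever the vector field is non-zero, so the only way $Q$ can fail is non-uniqueness). By De Morgan, $\neg P$ is the disjunction ``the intensity on the surface is not smooth, \emph{or} the surface is not smooth, \emph{or} the surface is occluded,'' which is exactly the conclusion claimed. Thus the corollary follows immediately once $P \Rightarrow Q$ is in hand.

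The one point needing a little care — and the step I expect to be the main obstacle — is matching the vector field in the hypothesis of Corollary \ref{cor:contra-foliation} with the one produced by part (iii): part (iii) concerns the integral curves of \emph{the} smooth image vector field obtained by pushing forward a non-vanishing vector field on $M_1$ under $d\iota$, and one must note (via parts (i) and (ii) of the preceding corollary) that under the standing smoothness/unocclusion hypotheses this pushed-forward field is itself smooth and non-vanishing on the image, so Theorem \ref{thm:foliationthm} applies to it and the uniqueness clause $Q$ is genuinely available. Once that identification is made explicit, no further argument is required: the result is pure propositional logic applied to the established implication. I would therefore write the proof in two short sentences — quote part (iii) of the previous corollary, then apply the contrapositive — without any additional computation.
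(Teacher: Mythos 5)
Your proposal matches the paper's own proof: the paper establishes this corollary in one line as the contrapositive of Theorem \ref{thm:foliationthm} (equivalently of part (iii) of the preceding corollary), exactly the propositional-logic step you describe. Your added remark about identifying the pushed-forward vector field via parts (i) and (ii) is a sensible clarification but does not change the argument.
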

\begin{proof}
This is the contrapositive of Theorem \ref{thm:foliationthm}, or
part (iii) of the previous corollary. 
\end{proof}
In other words, we have obtained a sufficient condition for a singularity
in intensity, or the surface or an occlusion. 

The following lemma gives us a means for finding integral curves in
closed form. Its relevance to vision was observed by Faugeras (see
pg112 of \cite{faugeras93}); those image curves whose normals are
parallel to the gradient are the level curves of intensity. What follows
is an alternate statement. 

\begin{lem}
\label{lem:faugeras}The Hamiltonian operator $H=\left(-\frac{\partial}{\partial y},\frac{\partial}{\partial x}\right)$
has integral curves $I\left(x,y\right)=constant$
\end{lem}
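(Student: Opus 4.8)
The plan is to verify directly that the level sets $I(x,y)=c$ are the integral curves of the Hamiltonian vector field $H I = \left(-\frac{\partial I}{\partial y},\frac{\partial I}{\partial x}\right)$, on the open set $M$ where this vector field is nonvanishing. First I would recall that by Theorem~\ref{thm:foliationthm} the vector field $HI$ already possesses a one-parameter family of maximal integral curves partitioning $M$, so it suffices to exhibit \emph{one} family of curves that are tangent to $HI$ at every point and conclude by uniqueness that they coincide with the integral curves. The natural candidate family is the collection of connected components of the level sets $\{(x,y)\in M : I(x,y)=c\}$ as $c$ ranges over $\real$.

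The key computation is to show that along any such level set the tangent direction agrees with $HI$. Pick $m\in M$; since $(\partial I/\partial y,\partial I/\partial x)(m)\neq 0$, the gradient $\nabla I(m)=(\partial I/\partial x,\partial I/\partial y)(m)$ is nonzero, so by the Implicit Function Theorem the level set through $m$ is locally a smooth curve, and it may be parametrised as an integral curve $\gamma$ of some nonvanishing smooth vector field. The defining property $I(\gamma(t))=c$ differentiated gives $\nabla I(\gamma(t))\cdot\gamma'(t)=0$, i.e.\ $\gamma'(t)$ is orthogonal to $\nabla I$. But $HI=(-\partial I/\partial y,\partial I/\partial x)$ is, by inspection, also orthogonal to $\nabla I=(\partial I/\partial x,\partial I/\partial y)$, since their dot product is $-\frac{\partial I}{\partial y}\frac{\partial I}{\partial x}+\frac{\partial I}{\partial x}\frac{\partial I}{\partial y}=0$. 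In the plane the orthogonal complement of a nonzero vector is one-dimensional, so $\gamma'(t)$ and $HI(\gamma(t))$ are collinear; after reparametrising $\gamma$ we may take $\gamma'(t)=HI(\gamma(t))$ exactly, so $\gamma$ is an integral curve of $HI$.

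Finally I would invoke the uniqueness part of Theorem~\ref{thm:foliationthm}: through each point of $M$ there passes precisely one maximal integral curve of $HI$, so the level-set curve constructed above, extended to its maximal domain, \emph{is} that integral curve. Conversely, if $\gamma$ is any integral curve of $HI$, then $\frac{d}{dt}I(\gamma(t))=\nabla I(\gamma(t))\cdot HI(\gamma(t))=0$, so $I$ is constant along $\gamma$; hence every integral curve lies in a single level set. Combining the two inclusions gives the stated identification of the integral curves with the curves $I(x,y)=\mathrm{constant}$.

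The main obstacle is not the orthogonality computation, which is a one-line check, but rather being careful about the distinction between a level set $\{I=c\}$ as a whole and its individual connected components: a single value $c$ may have several components, each a distinct maximal integral curve, and a component need not be a closed curve. I would therefore phrase the conclusion in terms of connected components of level sets, and rely on Theorem~\ref{thm:foliationthm} (via Lemma~\ref{lem:integrability}) to guarantee that these components, suitably parametrised, are exactly the maximal integral curves; the regularity needed to apply the Implicit Function Theorem is exactly the hypothesis that $HI$ is nonvanishing, i.e.\ that we work on $M$.
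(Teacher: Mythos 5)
Your proposal is correct and follows essentially the same route as the paper's own proof: differentiate $I(\gamma(t))=c$ along a level curve to get $\nabla I\cdot\gamma'=0$ and identify the level-set tangent direction with the Hamiltonian direction. In fact your version is more complete than the paper's, which stops at ``the gradient is perpendicular to the level set'' and asserts the conclusion; you additionally make explicit that $H I\perp\nabla I$, that orthogonal complements in the plane are one-dimensional (so the two directions must be collinear), and you supply the converse inclusion that $I$ is constant along any integral curve of $H I$, together with the caveat about connected components.
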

\begin{proof}
Let $x_{0}$ be the point of interest. The level set going through
$x_{0}$ is ${x\mid f(x)=f(x_{0})}$. Consider a curve $\gamma(t)$
in the level set going through $x_{0}$, so we will assume that $\gamma(0)=x_{0}$.
We have

\[
f((\gamma(t))=f(x_{0})=c\]

Now let us differentiate at $t=0$ by using the chain rule. We find

\[
J_{f}(\mathbf{x}_{0})\gamma^{\prime}(0)=0\]

Equivalently, the Jacobian of $f$ at $x_{0}$ is the gradient at
$x_{0}$

\[
\nabla f(x_{0})\cdot\gamma^{\prime}(0)=0\]

Thus, the gradient of f at $x_{0}$is perpendicular to the tangent
$\gamma^{\prime}(0)$ to the curve (and to the level set) at that
point. Since the curve $\gamma(t)$ is arbitrary, it follows that
the gradient is perpendicular to the level set. 

Thus the integral curves of the Hamiltonian are level curves of intensity,
that is curves with $I\left(x,y\right)={\rm constant}$. 
\end{proof}
Thus the Hamiltonian is an operator whose integral curves we can calculate
in closed form, circumventing all the errors associated with numerical
differentiation, numerical equation solving, and even floating point
arithmetic. However there is an exception at a finite number of points
where the vector field on a surface vanishes. 

\label{LevelCurves}%
\begin{figure}
\caption{Level curves of smooth $I$ are boundaries of $I(p)<c$}
\includegraphics{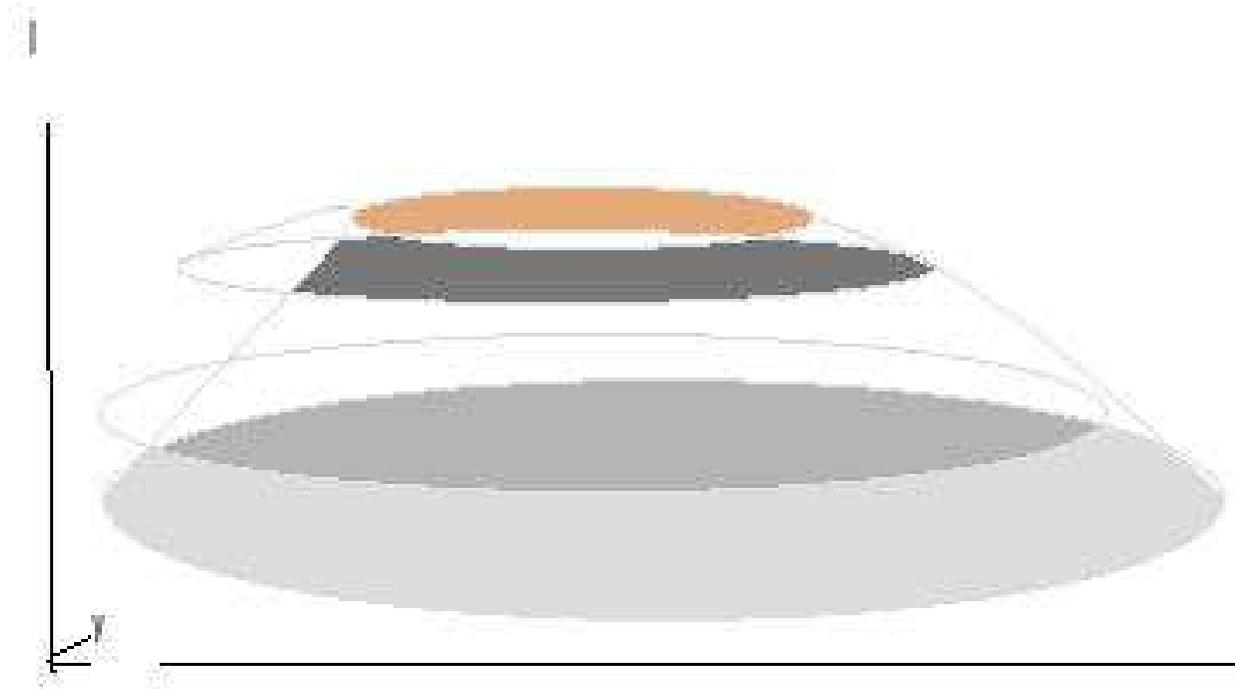}

\end{figure}

An image is only a discrete sample of measurements of light reflected
from a surface. If a pixel has a particular measurement value, none
of its neighbors necessarily has the same value. It is therefore not
feasible to calculate level curves by trying to chain together pixels
that have the same measurement value. A more sensible procedure is
to calculate the set of image points whose measurement value is less
than the particular measurement value, and then compute the boundary
of this set (Figure \ref{LevelCurves}).

\begin{defn}
Let \[
I_{c}=\partial\left\{ p\mid I(p)<c\right\} \]
where $\partial$ denotes the boundary of the set.
\end{defn}
\begin{thm}
\label{thm:discontinuity-existence}If $p\in I_{c}\cap I_{c^{\prime}}$
with $c\neq c^{\prime}$ then either the intensity on the surface
is not smooth, or the surface is not smooth, or the surface is occluded.
\end{thm}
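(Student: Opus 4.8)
The plan is to argue by the contrapositive, matching the methodology already used for Corollary~\ref{cor:contra-foliation}. So I would assume that the surface $M_1$ carrying the intensity is smooth, that it is unoccluded, and that the intensity on $M_1$ is smooth, and then show that $I_c\cap I_{c'}$ is empty whenever $c\neq c'$; reading this implication backwards is exactly the assertion of the theorem.

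First I would transfer smoothness from the surface to the image. When $M_1$ is smooth and unoccluded and lies in the region where $\iota$ is defined, the differential computation for $\iota$ recorded above shows that $d\iota$ restricted to $M_1$ has rank $2$ at every point, precisely because ``unoccluded'' means there are no occluding points; hence $\iota\vert_{M_1}$ is an immersion of a $2$-manifold into the plane, and therefore a local diffeomorphism onto its image. Composing the given smooth intensity on $M_1$ with the local inverses of $\iota\vert_{M_1}$ exhibits $I$ as a smooth, and in particular continuous, function on the image of $M_1$. (The only point needing care here is that two sheets of $M_1$ could a priori cover the same pixel and make $I$ ambiguous; I would treat this exactly as in the corollary preceding this section, where ``the image of $M_1$'' is understood sheet by sheet.)

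Next I would record a point-set fact about sublevel sets of a continuous function. Put $A=\{p:I(p)<c\}$, so $I_c=\partial A=\overline{A}\cap\overline{A^{c}}$. If $p\in\overline{A}$, then taking $p_n\to p$ with $I(p_n)<c$ and using continuity gives $I(p)\le c$; if $p\in\overline{A^{c}}$, i.e.\ $p$ is not interior to $A$, then taking $q_n\to p$ with $I(q_n)\ge c$ gives $I(p)\ge c$. Hence $I\equiv c$ on $I_c$, and likewise $I\equiv c'$ on $I_{c'}$. Within the vocabulary of the paper this says that, away from the finitely many zeros of the Hamiltonian, $I_c$ is an arc of the level curve $\{I=c\}$, which by Lemma~\ref{lem:faugeras} is an integral curve of the Hamiltonian and, by Theorem~\ref{thm:foliationthm}, the unique maximal one through each of its points; but the continuity argument already suffices and also covers the exceptional points.

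The conclusion is then immediate: if $p\in I_c\cap I_{c'}$, the previous step forces $c=I(p)=c'$, contradicting $c\neq c'$; so under the three hypotheses the intersection is empty, which is the contrapositive we wanted. (One could equally finish by invoking Corollary~\ref{cor:contra-foliation}, since such a $p$ would lie on the two distinct level curves $\{I=c\}$ and $\{I=c'\}$, hence in more than one maximal integral curve.) The genuinely delicate step is the first one: pinning down that ``smooth intensity on a smooth unoccluded surface'' really yields an honest continuous $I$ on the image, which is where the hypothesis that tangent planes avoid the optical centre is used; the point-set topology and the final $c\neq c'$ contradiction are routine.
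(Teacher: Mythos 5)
Your proof is correct, but it takes a genuinely different route from the paper's. The paper argues directly: $I_{c}$ and $I_{c^{\prime}}$ consist of level curves of intensity, these are maximal integral curves of the Hamiltonian by Lemma~\ref{lem:faugeras}, and a point on two distinct maximal integral curves triggers Corollary~\ref{cor:contra-foliation} --- two lines, resting entirely on the foliation machinery. You instead prove the contrapositive by pure point-set topology: under the three hypotheses $I$ is continuous on the image, continuity forces $I\equiv c$ on $\partial\{I<c\}$ and $I\equiv c^{\prime}$ on $\partial\{I<c^{\prime}\}$, so the two boundaries are disjoint when $c\neq c^{\prime}$. What your approach buys is twofold. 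First, it is more elementary and slightly stronger: it needs only \emph{continuity} of the induced image intensity, not smoothness, and it makes no use of integral curves at all. Second, it quietly repairs a small gap in the paper's own argument: Lemma~\ref{lem:faugeras} and Theorem~\ref{thm:foliationthm} apply only on the open set where the Hamiltonian vector field is non-vanishing, so ``$I_{c}$ consists of maximal integral curves'' is not literally true at critical points of $I$; your continuity argument covers those exceptional points uniformly, as you note. What the paper's route buys in exchange is that it keeps the theorem visibly tied to the foliation/integral-curve narrative that motivates the whole paper and the algorithm built on it. Your first step --- that a smooth intensity on a smooth unoccluded surface whose tangent planes avoid the optical centre induces a smooth $I$ on the image, via the rank-$2$ computation for $d\iota$ --- is exactly the content of part (i)--(ii) of the corollary preceding Corollary~\ref{cor:contra-foliation}, and you are right that this, together with the multi-sheet caveat, is the only genuinely delicate point; the paper absorbs it silently by citing that corollary.
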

\begin{proof}
I$_{c}$and I$_{c^{\prime}}$ consist of level curves of intensity,
which are maximal integral curves of the Hamiltonian operator (from
Lemma \ref{lem:faugeras}). Since $p$ lies in these two distinct
maximal integral curves, it follows from Corollary \ref{cor:contra-foliation}
that either the intensity on the surface is not smooth, or the surface
is not smooth, or the surface is occluded. 
\end{proof}
The surfaces of objects in scenes have another property, namely they
are bounded, and can be considered to be closed sets. A closed and
bounded subset of ${\bf \real}^{m}$ is called compact. A more general
definition of compactness can be given in the more general setting
of topological spaces. A collection of open sets is said to cover
a set $S$ if $S$ is contained in the union of all the open sets
in the collection. A set $S$ is called compact\index{compact} if
every collection of open sets covering $S$ has a finite sub-collection
that covers $S$. This is not an easy property to grasp, but it is
a very important one. Theorem 6.3 in \cite{munkres75} asserts that
the closed and bounded subsets of ${\bf \real}^{m}$ coincide with
the compact subsets.

Surfaces can therefore be considered to be compact manifolds, but
this includes parts of the surface that are not visible, or in contact
with other surfaces like the ground.

It can be shown that all maximal integral curves of a compact smooth
manifold are complete \cite{warner83}. Thus a hypothesis that an
image is a smooth image of an unoccluded, smooth compact surface implies
that all maximal integral curves are closed curves. We intuitively
know that this follows from an absurd assumption, namely that \textit{all}
of a surface is visible. However, since we want to reject such a hypothesis,
we must employ the consequences which are most certain to lead to
a contradiction. This is why we will generate only closed curves,
by completion of curves if necessary. The phenomenon of completion
occurs in human vision, and can be demonstrated vividly by an optical
illusion called the Kanisza triangle.

An algorithm was implemented based on Theorem \ref{thm:discontinuity-existence},
using values $c$ in constant increments. This algorithm used no floating
point arithmetic. This was run on a super-computer belonging to the
University of Melbourne. The results of this were surprising. If very
small increments are chosen then almost the entire image is returned.
What this shows is that the algorithm can detect discontinuities that
are dense, not just discontinuities that fall on a curve. However
if larger increments are chosen, then edges are returned. The discontinuities
found do not surround objects completely.

\section{Relation to threshold's}

We will now show that the existence of a neighboring point and a threshold
such that the difference in measurements exceeds the threshold is
a necessary but not sufficient condition for an occlusion or singularity
to exist.

\begin{lem}
If $p\in I_{c}\cap I_{c^{\prime}}$then there is a neighboring point
$q$ such that \[
I\left(p\right)-I\left(q\right)>\left|c-c^{\prime}\right|\]

\end{lem}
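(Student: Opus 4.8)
The plan is to extract, from the single hypothesis $p\in I_{c}\cap I_{c'}$, two independent pieces of information: one that bounds $I(p)$ from below, and one that produces an adjacent pixel $q$ whose value is bounded from above; subtracting them gives the claim. First I would normalise by assuming $c<c'$; this loses no generality because the conclusion involves only $|c-c'|$, and the two boundary sets may be relabelled.

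Next I would spell out what membership in a boundary set means on the pixel array. The set $I_{\gamma}=\partial\{p\mid I(p)<\gamma\}$ consists of the grid points lying in the closure of $\{I<\gamma\}$ and in the closure of its complement; concretely — and this is how $I_{\gamma}$ is produced by the algorithm of the previous section — $p\in I_{\gamma}$ exactly when $I(p)\ge\gamma$ while at least one neighbour of $p$ has value $<\gamma$. Applying this with $\gamma=c'$ yields $I(p)\ge c'$. Applying it with $\gamma=c$ yields a neighbour $q$ of $p$ with $I(q)<c$. Then
\[
I(p)-I(q)\;\ge\;c'-I(q)\;>\;c'-c\;=\;|c-c'| ,
\]
which is exactly the assertion.

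The step that needs genuine care — and the one I expect to be the \emph{main obstacle} — is making the discrete notions of ``boundary'' and ``neighbouring point'' precise enough that the estimate comes out with the sign claimed. Under the opposite (inner-boundary) reading of $I_{\gamma}$ one would instead obtain $I(p)<c<c'$ together with a neighbour of value $\ge c'$, so the roles of $p$ and $q$ would swap and the inequality would reverse; and if a detected point $p$ were permitted to have $c\le I(p)<c'$, then the total jump $c'-c$ would be split between the step into $\{I<c\}$ and the step into $\{I\ge c'\}$, and no single neighbour need carry all of it. So the real work is the bookkeeping that fixes which side of each level the detected point sits on, consistent with the definition of $I_{c}$ used above; once that is pinned down the one-line estimate closes the proof. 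The same bookkeeping also makes transparent why the implication cannot be reversed, i.e. why exceeding a threshold at some neighbour is necessary but not sufficient for $p\in I_{c}\cap I_{c'}$.
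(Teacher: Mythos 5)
Your proposal is correct and follows essentially the same route as the paper: both arguments unpack $p\in I_{\gamma}$ as ``$I(p)\ge\gamma$ and some neighbour has value $<\gamma$,'' then combine the lower bound on $I(p)$ from one threshold with the neighbour produced by the other to get the strict inequality. The only differences are cosmetic — you normalise to $c<c'$ where the paper derives both inequalities symmetrically and selects the relevant one, and you make explicit the discrete reading of the boundary that the paper uses without comment.
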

\begin{proof}
If $p\in I_{c}\cap I_{c^{\prime}}$then $p\in\partial\left\{ r\mid I(r)<c\right\} $and
$p\in\partial\left\{ r\mid I\left(r\right)<c^{\prime}\right\} $

Thus $I\left(p\right)\geq c$ and $I\left(p_{c}\right)<c$ for some
neighboring point $p_{c}$. 

Also $I\left(p\right)\geq c^{\prime}$and $I\left(p_{c^{\prime}}\right)<c^{\prime}$for
some neighboring point $p_{c^{\prime}}$. 

Thus $I\left(p\right)-I\left(p_{c^{\prime}}\right)>c-c^{\prime}$
and $I\left(p\right)-I\left(p_{c}\right)>c^{\prime}-c$. 

It follows that $p$ has a neighboring point $q$ such that $I\left(p\right)-I\left(q\right)>\mid c-c^{\prime}\mid$ 
\end{proof}
We have shown that if the sufficient condition for an occlusion or
discontinuity in an image or scene is met at a point in an image (see
Theorem 3.3 in PhD thesis), then there is a neighboring point and
a threshold that must be exceeded. In other words we have obtained
a necessary condition for a sufficient condition. Unfortunately this
does not suffice because the condition is not necessary and sufficient.

\begin{example}
$I\left(p\right)-I\left(q\right)>\mid c-c^{\prime}\mid$does not imply
$p\in I_{c}\cap I_{c^{\prime}}$ Let $I\left(p\right)=5$ $I\left(p_{c}\right)=3$
$I\left(p_{c^{\prime}}\right)=3$ $c=2$ $c^{\prime}=1$ Thus $I\left(p\right)-I\left(p_{c^{\prime}}\right)>c-c^{\prime}$
and $I\left(p\right)-I\left(p_{c}\right)>c^{\prime}-c$ However $I\left(p_{c}\right)\geq c$
so $p$ is not in $I_{c}$ 
\end{example}
What this lemma and example tells us is that threshold's will give
us a set of occlusions and discontinuities, but will also include
points that are neither occlusions nor discontinuities. We now give
a stronger condition that is more useful.

\begin{lem}
Lemma $p\in I_{c}\cap I_{c^{\prime}}$ with $c\neq c^{\prime}$ if
and only if at point $p$ there exist two neighboring points $p^{\prime}$
and $p^{\prime\prime}$ and two thresholds $\varepsilon>0$ and $\varepsilon^{\prime}>0$
such that $I\left(p\right)-I\left(p^{\prime}\right)\geq\varepsilon$
and $I\left(p\right)-I\left(p^{\prime\prime}\right)\geq\varepsilon^{\prime}$
and $\varepsilon+I\left(p^{\prime}\right)\neq\varepsilon^{\prime}+I\left(p^{\prime\prime}\right)$
\end{lem}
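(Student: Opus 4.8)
The plan is to prove this as a biconditional by showing that the stated local condition is logically equivalent to the earlier characterisation $p \in I_c \cap I_{c'}$ with $c \neq c'$, essentially by unwinding the definition of $I_c = \partial\{r \mid I(r) < c\}$. The key observation is that the two conditions say the same thing once one sets $\varepsilon = c - I(p')$ (or similar): membership of $p$ in $I_c$ forces $I(p) \geq c$ together with the existence of a nearby point where $I < c$, and the displayed inequalities $I(p) - I(p') \geq \varepsilon$ encode exactly this gap, while the inequality $\varepsilon + I(p') \neq \varepsilon' + I(p'')$ is what guarantees the two thresholds $c$ and $c'$ are distinct.

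\begin{proof}
($\Rightarrow$) Suppose $p \in I_c \cap I_{c'}$ with $c \neq c'$. As in the previous lemma, $p \in \partial\{r \mid I(r) < c\}$ gives $I(p) \geq c$ and the existence of a neighboring point $p'$ with $I(p') < c$; likewise $p \in \partial\{r \mid I(r) < c'\}$ gives $I(p) \geq c'$ and a neighboring point $p''$ with $I(p'') < c'$. Set $\varepsilon = c - I(p') > 0$ and $\varepsilon' = c' - I(p'') > 0$. Then $I(p) - I(p') = I(p) - c + \varepsilon \geq \varepsilon$ since $I(p) \geq c$, and similarly $I(p) - I(p'') \geq \varepsilon'$. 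Moreover $\varepsilon + I(p') = c$ and $\varepsilon' + I(p'') = c'$, and these are unequal by hypothesis.

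($\Leftarrow$) Conversely, suppose such $p', p'', \varepsilon, \varepsilon'$ exist. Put $c = \varepsilon + I(p')$ and $c' = \varepsilon' + I(p'')$, so $c \neq c'$. Since $I(p) - I(p') \geq \varepsilon$ we have $I(p) \geq c$, while $I(p') = c - \varepsilon < c$, so $p'$ witnesses that $p$ is a limit of points in $\{r \mid I(r) < c\}$ and $p$ itself is not interior to that set (as $I(p) \geq c$); hence $p \in \partial\{r \mid I(r) < c\} = I_c$. The same argument with $p''$ gives $p \in I_{c'}$. Thus $p \in I_c \cap I_{c'}$ with $c \neq c'$.
\end{proof}

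\textbf{The main obstacle} is purely one of care rather than depth: one must be precise about what ``$p \in \partial S$'' requires in both directions. In the forward direction it is routine once the previous lemma's unwinding is reused. In the reverse direction the subtlety is that from a neighboring point $p'$ with $I(p') < c$ one needs $p$ to actually lie on the boundary of $\{r \mid I(r)<c\}$, not merely in its closure; this is where the inequality $I(p) - I(p') \geq \varepsilon$ (forcing $I(p) \geq c$, so $p \notin \{r \mid I(r) < c\}^{\circ}$ in any reasonable sense, and in particular $p$ is not an interior point) does the work. One should also note that a single neighbouring point suffices for the boundary membership because ``neighboring'' here is interpreted in the discrete image sense, so arbitrarily close points of the sublevel set are available; if a continuous topology were intended one would instead argue that $p$ is a limit point of the sublevel set, but the conclusion is unchanged. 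The role of the final inequality is simply to carry the distinctness $c \neq c'$ across the equivalence, and it is exactly the negation of the degenerate case $c = c'$ that trivialises the statement.
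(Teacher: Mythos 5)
Your proof is correct and follows essentially the same route as the paper: both directions are handled by the same substitutions ($c=\varepsilon+I(p^{\prime})$, $c^{\prime}=\varepsilon^{\prime}+I(p^{\prime\prime})$ in one direction and $\varepsilon=c-I(p_{c})$, $\varepsilon^{\prime}=c^{\prime}-I(p_{c^{\prime}})$ in the other), unwinding the definition of $I_{c}$ as a boundary exactly as the paper does. If anything, your forward direction is slightly more complete than the paper's, since you explicitly verify $\varepsilon+I(p^{\prime})=c\neq c^{\prime}=\varepsilon^{\prime}+I(p^{\prime\prime})$, a step the paper leaves implicit.
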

\begin{proof}
Suppose at point $p$ there exist two neighboring points $p^{\prime}$
and $p^{\prime\prime}$and thresholds $\varepsilon,\varepsilon^{\prime}$
such that $I\left(p\right)-I\left(p^{\prime}\right)\geq\varepsilon>0$
and $I(p)-I\left(p^{\prime\prime}\right)\geq\varepsilon^{\prime}>0$. 

Let $c=\varepsilon+I\left(p^{\prime}\right)$ 

Then $I\left(p\right)\geq\varepsilon+I\left(p^{\prime}\right)=c$ 

And $c>I\left(p^{\prime}\right)$ since $\varepsilon>0$ 

Thus $I\left(p\right)\geq c$ and $I\left(p^{\prime}\right)<c$. It
follows that $p\in I_{c}=\partial\left\{ p\mid I\left(p\right)<c\right\} $. 

Letting $c^{\prime}=\varepsilon^{\prime}+I(p^{\prime\prime})$ it
follows similarly that $p\in I_{c^{\prime}}$ 

From $c=\varepsilon+I\left(p^{\prime}\right)\neq\varepsilon^{\prime}+I\left(p^{\prime\prime}\right)=c^{\prime}$
it follows $c\neq c^{\prime}$. Hence $p\in I_{c}\cap I_{c^{\prime}}$
with $c\neq c^{\prime}$.
\end{proof}
Suppose $p\in I_{c}\cap I_{c^{\prime}}$ with $c\neq c^{\prime}$.
Then

\[
I\left(p\right)\geq c>I\left(p_{c}\right)\Rightarrow I\left(p\right)-I\left(p_{c}\right)\geq c-I\left(p_{c}\right)>0\]

and

\[
I\left(p\right)\geq c^{\prime}>I\left(p_{c^{\prime}}\right)\Rightarrow I\left(p\right)-I\left(p_{c^{\prime}}\right)\geq c^{\prime}-I\left(p_{c^{\prime}}\right)>0\]

Letting $\varepsilon=c-I\left(p_{c}\right)$ and $\varepsilon^{\prime}=c^{\prime}-I\left(p_{c^{\prime}}\right)$
and $p^{\prime}=p_{c}$ and $p^{\prime\prime}=p_{c^{\prime}}$ it
follows that $I\left(p\right)-I\left(p^{\prime}\right)\geq\varepsilon>0$
and $I\left(p\right)-I\left(p^{\prime\prime}\right)\geq\varepsilon^{\prime}>0$.

The reader should note that the neighboring points $p^{\prime}$ and
$p^{\prime\prime}$ can coincide, providing $\varepsilon\neq\varepsilon^{\prime}$

Consider $H=\left(-\frac{\partial}{\partial t},\frac{\partial}{\partial x}\right)$
which is a Hamiltonian operator. The integral curves of $H$ are level
curves $I\left(x,t\right)$. 

The lemma shows that if $\left(x,t\right)$ has two neighboring points
$\left(x_{i},t_{i}\right)i=1,2$ such that $I\left(x,t\right)-I\left(x_{i},t_{i}\right)>\varepsilon_{i}>0$
with $\varepsilon_{1}+I\left(x_{1},t_{1}\right)\neq\varepsilon_{2}+I\left(x_{2},t_{2}\right)$
then there is an occlusion or discontinuity at$\left(x,t\right)$. 

So if we used two consecutive video frames and two different thresholds,
we could apply the test to the time increments to determine a set
of occlusions and discontinuities. We could also use three consecutive
video frames and two different thresholds, and apply the test. 

Note that we can apply any smooth differential operator to the image
function. For example we could differentiate the green component and
set $I\left(x,y\right)=\left(\frac{\partial G}{\partial x}\right)\left(x,y\right)$.
Then applying the same algorithm to I would determine a set of occlusions
and discontinuities. We can also use higher order smooth differential
operators, or finite difference approximations to smooth differential
operators. In fact we can use absolutely any process that transforms
an arbitrary smooth function into another arbitrary smooth function,
to substitute in a new I into the algorithm.

\section{applications to image analysis or computer vision}

Two main components of a camera or the eye are a lens and a collection
of light sensors on a surface, called the  retina. The retina of a
camera is usually planar, and will be referred to as the retinal plane.
The camera forms an image of the scene in front of it. The correspondence
between a point in the scene and a point in the retinal plane can
be approximately given by a straight line through the optical centre
of the  camera (see figure \ref{perspective} ).

This model can be justified by geometric optics. A convex lens with
two spherical surfaces whose thickness  can be neglected is called
a thin lens in geometric optics. The lens of a camera is usually a
series of thin lenses (called a  thick lens in geometric optics,  see
page 25 of \cite{welford88}). The paraxial or Gaussian assumption
of geometric optics is  that the angle between a scene point and the
optical axis of  the lens system is small. It is known in geometric
optics that, under the paraxial assumption,  the vertical distance
between the scene point and the optical axis and the vertical distance
between its focussed image point and the  optical axis is a constant,
called the magnification of  the lens system (see page 28 of \cite{welford88}).

The elementary physical concept of reflection is that a light ray
incident to a surface will be reflected at an angle to the surface
normal equal to the angle of incidence, such that incident and exitant
rays lie in the same plane through the normal vector. A mirror is
an example of such an ideal, or specular reflector. The microscopic
structure of most materials in scenes  is not smooth, so a scene surface
will scatter light in different directions. Most materials are also
not perfectly homogeneous on a microscopic scale, and thus scatter
light rays that penetrate the surface by refraction and reflection
at boundaries between regions of different refractive indices. Scattered
rays may reemerge near the point of entry, and so contribute to diffuse
reflection. Snow and layers of white paint are examples of materials
with this behaviour \cite{horn-sjoberg89}.

A surface is called Lambertian if it appears equally bright from all
directions, regardless of how it is irradiated, and reflects all incident
light \cite{horn-sjoberg89}. This does not imply that different points
on the same surface have the same intensity, because the value of
the intensity depends on other variables such as the angle between
the surface normal and a light source. When a smooth lambertian surface
is rendered by computer  graphics, its appearance is like a dull smooth
plastic \cite{lorig86}. 

Most materials in scenes are neither specular reflectors nor Lambertian
reflectors, but a combination of both. A surface rendered by computer
graphics will appear shinier as the specular component is increased
\cite{lorig86,hearn-baker86}. The appearance of texture (to the eye)
is due to particularly rapid  variations in normal vectors over small
regions. For example, a random pattern of normal vectors when rendered
by computer graphics can produce an effect of fog. Various repetitive
patterns of normal vectors can be used to render real materials, such
as bricks by computer graphics \cite{hearn-baker86}. The Lambertian
assumption can also be used to directly recover surface shape from
image shading \cite{brooks-chojnacki94,chojnacki-etal94}.

Owing to the microstructure of a scene surface, the light radiated
from the surface undergoes extremely rapid intensity changes over
small regions. This can be confirmed by examining gray values of neighbouring
points in a real image.

\label{perspective}%
\begin{figure}
\caption{}

\includegraphics{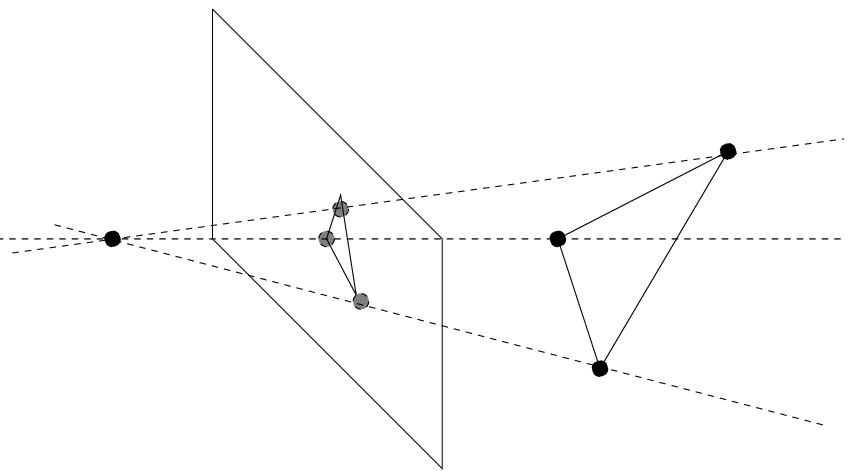}
\end{figure}

Let $\iota:(x,y,z)\rightarrow(t,s)$ where $t=\frac{x}{z},\, s=\frac{y}{z}$,
and $z\neq0$. For a camera with retina at $z=1$ and optical centre
at the origin, the function $\iota$ represents image formation. The
point $(x,y,z)$ is being viewed, and its image is formed at $(\frac{x}{z},\frac{y}{z})$. 

Note that the co-ordinates $(t,s)$ must be $(0,0)$ at the point
where the optical axis meets the retinal plane perpendicularly. It
is preferred to find this point in the image by some procedure. Zooming
the camera and finding the point around which the image expands or
contracts may be one way. The unit of distance is approximately the
distance between the pupil and the ccd.

For the purpose of stereo or binocular vision, points in an image
where the surface being viewed is not smooth need to be distinguished
from occluding points. This is because under the assumption of Lambertian
shading, different viewpoints of the same point have similar intensity
values. Points where the surface being viewed is not smooth possess
invariance to viewpoint. On the other hand occluding points move on
the surface when they are viewed from a different viewpoint. Occluding
points can be eliminated by filtering out zeroes of the Hamiltonian.
This does require floating point arithmetic. For other purposes such
as robot navigation where we are just looking for empty spaces this
is not an issue. 

Also note that we could use any discrete topology on the plane to
define the neighbors of a point. For example we could use 4 neighbors
north, west, east and south of each pixel. Or we could use 8 neighbors.
Or some other topology would also do.

\bibliographystyle{plain}
\bibliography{BASE,PHYSIOL}

\begin{thebibliography}{10}

\bibitem{bhav96}
B.~Bhavnagri.
\newblock {\em Computer Vision using Shape Spaces}.
\newblock PhD thesis, University of Adelaide, 1996.

\bibitem{binmore82}
K.G. Binmore.
\newblock {\em Mathematical analysis: a straightforward approach}.
\newblock Cambridge University Press, 1982.

\bibitem{brickell-clark70}
F.~Brickell and R.S. Clark.
\newblock {\em Differentiable manifolds: an introduction}.
\newblock Van Nostrand Reinhold, 1970.

\bibitem{brooks-chojnacki94}
M.J. Brooks and W.~Chojnacki.
\newblock Direct computation of shape from shading.
\newblock {\em Pattern Recognition}, 1:114--119, 1994.

\bibitem{chojnacki-etal94}
W.~Chojnacki, M.J. Brooks, and D.~Gibbons.
\newblock Revisiting pentlands estimator of light source direction.
\newblock {\em Journal of the Optical Society of America A}, 11(1):118--124,
  1994.

\bibitem{faugeras93}
O.~Faugeras.
\newblock {\em Three dimensional computer vision: a geometric viewpoint}.
\newblock MIT Press, 1993.

\bibitem{hearn-baker86}
D.~Hearn and M.P. Baker.
\newblock {\em Computer Graphics}.
\newblock Prentice-Hall, 1986.

\bibitem{hoffman66}
W.C. Hoffman.
\newblock The lie algebra of visual perception.
\newblock {\em Journal of Mathematical Psychology}, 3(1):65--98, 1966.

\bibitem{horn-sjoberg89}
B.K.P. Horn and R.W. Sjoberg.
\newblock Calculating the reflectance map.
\newblock In B.K.P. Horn and M.J. Brooks, editors, {\em Shape from shading},
  chapter~8, pages 215--244. MIT Press, 1989.

\bibitem{kanizsa79}
G.~Kanizsa.
\newblock {\em Organization in vision, essays on gestalt perception}.
\newblock Praeger, 1979.

\bibitem{le-bhavnagri97}
H.~Le and B.~Bhavnagri.
\newblock On simplifying shapes by subjecting them to collinearity constraints.
\newblock {\em Mathematical Proceedings of the Cambridge Philosophical
  Society}, 121(2), 1997.

\bibitem{lorig86}
G.~Lorig.
\newblock Advanced image synthesis - shading.
\newblock In {\em Advances in computer graphics I}, volume XII of {\em
  Eurographic seminars}, pages 441--456. Springer-Verlag, 1986.

\bibitem{munkres75}
J.R. Munkres.
\newblock {\em Topology: A first course}.
\newblock Prentice-Hall, Englewood Cliffs New Jersey, 1975.

\bibitem{warner83}
F.~W. Warner.
\newblock {\em Foundations of differential manifolds and lie groups}.
\newblock Springer-Verlag, New York, 1983.

\bibitem{welford88}
W.T. Welford.
\newblock {\em Optics}.
\newblock Oxford Physics Series. Oxford Science Publications, 1988.

\bibitem{wertheimer12}
M.~Wertheimer.
\newblock Experimentelle studien uber das sehen von bewegung.
\newblock {\em Zeitschrift f. Psychol.}, 61:161--265, 1912.

\bibitem{wertheimer38}
M.~Wertheimer.
\newblock {\em Laws of organization in perceptual forms}.
\newblock Harcourtm Brace and Co, 1938.

\end{thebibliography}
\end{document}